\newtheorem{thm}{Theorem}[section]
\newtheorem{cor}[thm]{Corollary}
\newtheorem{lem}[thm]{Lemma}
\newtheorem{prop}[thm]{Proposition}
\theoremstyle{definition}
\newtheorem{defin}[thm]{Definition}
\newtheorem{rem}[thm]{Remark}
\numberwithin{equation}{section}
\def\BMO{\mathrm{BMO}}
\def\LMO{\mathrm{LMO}}
\def\bmo{\mathrm{bmo}}
\def\lmo{\mathrm{lmo}}
\newcommand{\bprop} {\begin{proposition}}
\newcommand{\eprop} {\end{proposition}}
\newcommand{\btheo} {\begin{theorem}}
\newcommand{\etheo} {\end{theorem}}
\newcommand{\blem} {\begin{lemma}}
\newcommand{\elem} {\end{lemma}}
\newcommand{\bcor} {\begin{corollary}}
\newcommand{\ecor} {\end{corollary}}
\newcommand{\Be}{\begin{equation}}
\newcommand{\Ee}{\end{equation}}
\newcommand{\Bea}{\begin{eqnarray}}
\newcommand{\Eea}{\end{eqnarray}}
\newcommand{\Bes}{\begin{equation*}}
\newcommand{\Ees}{\end{equation*}}
\newcommand{\Beas}{\begin{eqnarray*}}
\newcommand{\Eeas}{\end{eqnarray*}}
\newcommand{\Ba}{\begin{array}}
\newcommand{\Ea}{\end{array}}
\def\T{\mathbb{T}}
\begin{document}


\baselineskip=17pt



\title[Bounded mean oscillation on rectangles.]{An embedding relation for bounded mean oscillation on rectangles.}

\author[Beno\^it F. Sehba]{Beno\^it F. Sehba}
\email{bsehba@gmail.com}


\date{}

\begin{abstract}
In the two-parameter setting, we say a function belongs to the mean little $\BMO$, if its mean over any interval and with respect to any of the two variables has uniformly bounded mean oscillation. This space has been recently introduced by S. Pott and the author in relation with the multiplier algebra of the product $\BMO$ of Chang-Fefferman. We prove that the Cotlar-Sadosky space of functions of bounded mean oscillation $\bmo(\T^N)$ is a strict subspace of the mean little $\BMO$.
\end{abstract}

\subjclass[2010]{Primary 42B15, 32A37; Secondary 42B35}

\keywords{Bounded mean oscillation, logarithmic mean oscillation, product domains.}

\maketitle

\section{Introduction and results}
\subsection{Introduction}

 In the two-parameter case, the mean little $\BMO$ space consists of those functions such that their mean over any interval with respect to any of the two variables is uniformly in $\BMO(\T)$.  This space was introduced recently in the literature by S. Pott and the author in their way to the characterization of the multiplier algebra of the product $\BMO$ of Chang-Fefferman (\cite{ChFef1, pseh2, benoit}). Its definition is very close in spirit to the one of the little $\BMO$ of Cotlar and Sadosky (\cite{cotsad}) and this is somehow misleading. It is pretty clear that the little $\BMO$ embeds continuously into the mean little $\BMO$ and it was natural to ask if both spaces are the same. To find out, we use an indirect method; we characterize  the multiplier algebra of the Cotlar-Sadosky space and the set of multipliers from the little $BMO$ to the mean little $BMO$.

\subsection{Definitions and results}
Given two Banach spaces of functions $X$ and $Y$, the space of pointwise multipliers from $X$ to $Y$ is defined as follows
$$\mathcal {M}(X,Y)=\{\phi:\phi f\in Y\,\,\,\textrm{for all}\,\,\,f\in X\}.$$
When $X=Y$, we simply write $\mathcal {M}(X,X)=\mathcal {M}(X)$.

The so-called small BMO space on $\T^N$, introduced by Cotlar and Sadosky and denoted $\bmo(\T^N)$ consists of functions $b \in L^2(\T^N)$ such that the quantity
\begin{equation} \label{bmo1}
   ||b||_{*,N}:=\sup_{R \subset \T^N, \text{ rectangle }} \frac{1}{|R|}\int_R |b(t_1,\cdots,t_N) - m_R b| dt_1\cdots dt_N 
\end{equation}
is finite,
$m_Rb=\frac{1}{|R|}\int_Rb(t_1,\cdots,t_N)dt_1\cdots dt_N$. Seen as a quotient space with the set of constants, $\bmo(\T^N)$ is a Banach space with norm  $||b||_{\bmo(\T^N)}:=||b||_{*,N}.$


Note that in the above definition, since $R$ is a rectangle in $\mathbb {T}^N$, $m_Rf$ is a constant. We will sometimes consider the case where $R$ is a rectangle in $\mathbb {T}^M$ with $M$ an integer, $0<M<N$, in which case $m_Rf$ is a function of $N-M$ variables.

Another notion of function of bounded mean oscillation was introduced in \cite{benoit} in the two-parameter setting. This notion is inspired from the one of M. Cotlar and C. Sadosky (\cite{cotsad}). One of its higher-parameter versions is defined as follows.
\begin{defin}\label{meanbmo}
A function $b\in L^2(\T^N)$ belongs to $\bmo_{m}(\T^N)$ if there is a constant $C>0$ such that for any integers $0<N_1,N_2<N$, $N_1+N_2=N$ and any rectangle $R\subset \T^{N_1}$,
$$||m_R b||_{*,N_2}\le C.$$
\end{defin}
 The space $\bmo_m(\T^N)$ seen as a quotient space by the set of constants is a Banach space under the norm  $$||b||_{\bmo_{m}(\T^N)}:=C^*$$ where $C^*$ stands for the smallest constant in the above definition.

It is clear from the definitions above that $\bmo(\T^N)$ embeds continuously into $\bmo_m(\T^N)$.
We will be calling $\bmo_m$ the mean little $\BMO$.  Our main result is the following.
\begin{thm}\label{mainembed}
$\bmo(\T^N)$ is strictly continuously embedded into $\bmo_m(\T^N)$.
\end{thm}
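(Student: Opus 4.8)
The continuous inclusion $\bmo(\T^N)\hookrightarrow\bmo_m(\T^N)$ is the remark recorded just before the statement, so only the strictness requires work; following the route announced in the introduction, I would obtain it indirectly, through the pointwise multiplier spaces. The mechanism is elementary: $\mathcal{M}(X,Y)$ depends on $Y$ only as a set of functions, so if $\bmo(\T^N)$ and $\bmo_m(\T^N)$ coincided we would have $\mathcal{M}(\bmo(\T^N))=\mathcal{M}(\bmo(\T^N),\bmo_m(\T^N))$; since $\bmo(\T^N)\subseteq\bmo_m(\T^N)$ forces $\mathcal{M}(\bmo(\T^N))\subseteq\mathcal{M}(\bmo(\T^N),\bmo_m(\T^N))$ anyway, it suffices to exhibit one function $\vphi\in\mathcal{M}(\bmo(\T^N),\bmo_m(\T^N))\setminus\mathcal{M}(\bmo(\T^N))$. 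Indeed, picking then $f\in\bmo(\T^N)$ with $\vphi f\notin\bmo(\T^N)$, the product $\vphi f$ lies in $\bmo_m(\T^N)\setminus\bmo(\T^N)$, which is exactly what is wanted.

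The first half of the program is to characterize $\mathcal{M}(\bmo(\T^N))$. Using the classical equivalence that $b\in\bmo(\T^N)$ iff $b$ lies in $\BMO(\T)$ in each variable separately, uniformly with respect to the others, one reduces to the one-parameter theorem of Stegenga (and Nakai--Yabuta), $\mathcal{M}(\BMO(\T))=L^\infty(\T)\cap\BMO_{\log}(\T)$, with the logarithmic weight $\log\frac{e}{|I|}$ on the oscillation over $I$: freeze all but one variable, test $\vphi$ against functions of that variable, and use the John--Nirenberg inequality to handle the dependence on the frozen variables. The outcome should be $\mathcal{M}(\bmo(\T^N))=L^\infty(\T^N)\cap\bmo_{\log}(\T^N)$, the logarithmic small $\BMO$. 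The second, more delicate, half is the analogous analysis of $\mathcal{M}(\bmo(\T^N),\bmo_m(\T^N))$: here the target only constrains the averaged functions $m_R(\vphi f)$, and because passing to a mean $m_R$ over a rectangle $R\subset\T^{N_1}$ allows cancellation in those variables that is unavailable to a genuine pointwise multiplier of $\bmo(\T^N)$, the resulting space --- call it $\bmo_{\log,m}(\T^N)$ --- will turn out to be strictly larger than $L^\infty(\T^N)\cap\bmo_{\log}(\T^N)$. For the theorem one only uses the inclusion $\mathcal{M}(\bmo(\T^N))\subseteq L^\infty\cap\bmo_{\log}$ and the reverse inclusion $L^\infty\cap\bmo_{\log,m}\subseteq\mathcal{M}(\bmo(\T^N),\bmo_m(\T^N))$.

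It then remains to separate these two spaces, that is, to produce $\vphi\in L^\infty(\T^N)\cap\bmo_{\log,m}(\T^N)$ with $\vphi\notin\bmo_{\log}(\T^N)$; it is enough to do this for $N=2$ and let $\vphi$ depend on only two of the $N$ variables. I would take $\vphi$ bounded and lacunary, $\vphi=\sum_k\varepsilon_k\,u_k(t_1)v_k(t_2)$, built over dyadic rectangles $R_k$ of sidelengths $\ell_k\times\ell_k$ with $\ell_{k+1}\ll\ell_k$, tuned so that (a) the oscillation of $\vphi$ over $R_k$ is $\asymp 1$, hence $\log\frac{e}{|R_k|}\cdot\frac{1}{|R_k|}\int_{R_k}|\vphi-m_{R_k}\vphi|\asymp k\to\infty$ and $\vphi\notin\bmo_{\log}(\T^2)$, while (b) averaging $\vphi$ over any single interval in one variable annihilates the fast oscillation in that variable and, by the separation of scales, leaves a function that is uniformly in $\BMO_{\log}(\T)$ in the remaining variable, i.e. $\vphi\in\bmo_{\log,m}(\T^2)$. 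Part (a) is routine; the crux of the whole argument is the uniform logarithmic bound in part (b) after averaging, together with pinning down the weaker of the two multiplier characterizations of the previous step --- that is, making precise exactly how much cancellation the means $m_R$ afford. Once (a) and (b) are in hand, the opening reduction closes the proof.
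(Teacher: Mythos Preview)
Your overall plan --- separate $\bmo$ from $\bmo_m$ by separating their multiplier spaces --- is exactly the paper's route, and your sufficiency inclusion $L^\infty\cap\lmo_m\subseteq\mathcal{M}(\bmo,\bmo_m)$ is one half of Theorem~\ref{meanbmomulti}. The problem is the first half: you expect $\mathcal{M}(\bmo(\T^N))=L^\infty\cap\bmo_{\log}$ via ``freeze all but one variable and invoke Stegenga'', and this is precisely the one-parameter intuition the paper warns against (Remark~\ref{remark2}). Freezing and testing against one-variable functions yields only the variable-wise condition $\lmo_{inv}$ (uniform $\LMO(\T)$ in each slot), not the rectangle condition $\lmo(\T^N)$ carrying the weight $\sum_j\log\tfrac{4}{|I_j|}$ that your separating construction targets; and in the sufficiency direction the freezing heuristic actually fails, because when you slice $\phi f$ in one variable the unseen variables of $f$ contribute an unbounded additive constant that multiplication by $\phi$ turns into unbounded oscillation. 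So your necessity step lands you in $L^\infty\cap\lmo_{inv}$, while your lacunary example is built to exit $\lmo$; since $L^\infty\cap\lmo\subsetneq L^\infty\cap\lmo_{inv}$, the two ends of your argument do not meet.

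What the paper proves instead is much stronger and is the point you are missing: $\mathcal{M}(\bmo(\T^N))$ consists of the constants only (Theorem~\ref{bmomultconst}). The proof is genuinely multi-parameter: one tests $\phi$ against $\log_R=\sum_k\log_{S_k}+\sum_j\log_{Q_j}$ and computes the $\bmo$-oscillation of $\phi\log_R$ in the $s$-variables alone; letting one $|Q_j|\to 0$ forces the $s$-oscillation of $\phi$ to vanish, so $\phi$ is independent of $s$, and iterating gives a constant. Equivalently (Corollary~\ref{cor:cormain}), $L^\infty(\T^N)\cap\lmo(\T^N)$ is already just the constants. Once this is known your lacunary construction is unnecessary: \emph{any} nonconstant element of $L^\infty\cap\lmo_m$ separates the two multiplier spaces, and a tensor $\phi_1(t_1)\phi_2(t_2)$ with nonconstant $\phi_i\in L^\infty(\T)\cap\LMO(\T)$ already lies in $\lmo_{inv}\subset\lmo_m$.
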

To prove the above theorem, we first prove the following.
\begin{thm}\label{bmomultconst}
The only pointwise multipliers of $\bmo(\T^N)$ are the constants.
\end{thm}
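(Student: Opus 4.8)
Throughout, $N\ge 2$ (for $N=1$ the statement fails, the multipliers of $\BMO(\T)$ being a genuinely larger class). Equip $\bmo(\T^N)$ with the norm $\|b\|:=\|b\|_{L^2(\T^N)}+\|b\|_{*,N}$, under which it is a Banach space. If $\phi$ is a pointwise multiplier, the multiplication operator $M_\phi\colon b\mapsto \phi b$ maps $\bmo(\T^N)$ into itself; it is closed, since from $b_n\to b$ and $\phi b_n\to g$ in $\bmo(\T^N)$ one extracts a.e.\ convergent subsequences and identifies $g=\phi b$. Hence, by the closed graph theorem, $\|M_\phi\|<\infty$. The first consequence I would draw is that $\phi\in L^\infty(\T^N)$: since $\eins\in\bmo(\T^N)$ and $\phi$ is a multiplier, $\phi^n\in\bmo(\T^N)\subset L^2(\T^N)$ for every $n\ge 1$, and submultiplicativity of operator norms gives $\|\phi^n\|=\|M_{\phi^n}\eins\|\le\|M_\phi\|^n$, so $\big(\int_{\T^N}|\phi|^{2n}\big)^{1/2}\le\|M_\phi\|^n$, i.e.\ $\|\phi\|_{L^{2n}(\T^N)}\le\|M_\phi\|$ for all $n$; letting $n\to\infty$ yields $\|\phi\|_{L^\infty(\T^N)}\le\|M_\phi\|$.

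The heart of the matter is a quantitative decay of the mean oscillation of $\phi$ on thin rectangles. Fix a coordinate $k\in\{1,\dots,N\}$, a point $a\in\T$, a small $\delta\in(0,1)$, and let $Q\subset\T$ be the arc of length $\delta$ centred at $a$. Test the multiplier inequality against $f(t):=h(t_k)$, where $h(s):=\log\frac{1}{|s-a|}$ (distance on $\T$): this $h$ lies in $\BMO(\T)\cap L^2(\T)$ with norm an absolute constant, so $f\in\bmo(\T^N)$ with $\|f\|\le c$ absolute, and $\|\phi f\|_{*,N}\le\|M_\phi\|\,\|f\|\le C_0$. Let $R$ be any rectangle in $\T^N$ whose $k$-th side equals $Q$. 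On $Q$ we write $h=\log\frac{1}{\delta}+s$ with $s\ge 0$ and $\frac{1}{\delta}\int_Q s\,dt_k$ an absolute constant; since $\phi\in L^\infty$, $m_R(\phi h)=\log\frac{1}{\delta}\,m_R\phi+\mathrm{O}(\|\phi\|_\infty)$, hence $\phi h-m_R(\phi h)=\log\frac{1}{\delta}\,(\phi-m_R\phi)+\phi s-\mathrm{O}(\|\phi\|_\infty)$. Averaging absolute values over $R$ and using $\frac{1}{|R|}\int_R|\phi h-m_R(\phi h)|\le\|\phi f\|_{*,N}$ together with $0\le\frac{1}{|R|}\int_R s\le\mathrm{O}(1)$ gives
\[
\log\frac{1}{\delta}\cdot\frac{1}{|R|}\int_R|\phi-m_R\phi|\ \le\ C_0+\mathrm{O}(\|\phi\|_\infty),
\]
so that $\frac{1}{|R|}\int_R|\phi-m_R\phi|\le C_1/\log\frac{1}{\delta}$ for \emph{every} rectangle $R\subset\T^N$ whose $k$-th side has length $\delta$, with $C_1$ depending only on $\|M_\phi\|$ and independent of $R$, $a$ and $\delta$.

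To finish, fix $k$ and take $\delta=1/m$. Partition the $k$-th copy of $\T$ into arcs $Q_1,\dots,Q_m$ of length $1/m$, put $S_j:=\{t:t_k\in Q_j\}$ — a rectangle with $k$-th side $Q_j$ and all other sides equal to $\T$ — and $a_j:=m_{S_j}\phi$. The estimate above with $R=S_j$ gives $\int_{S_j}|\phi-a_j|\le|S_j|\,C_1/\log m$, so the function $g_m(t):=\sum_{j=1}^m a_j\,\eins_{Q_j}(t_k)$, which depends on $t_k$ alone, satisfies $\|\phi-g_m\|_{L^1(\T^N)}\le C_1/\log m\to 0$. Since the functions depending only on $t_k$ form a closed subspace of $L^1(\T^N)$ (the isometric image of $L^1(\T)$), $\phi$ coincides a.e.\ with a function of $t_k$. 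As $k$ was arbitrary and $N\ge 2$, $\phi$ agrees a.e.\ with a function of $t_1$ and with a function of $t_2$; equating these two representations forces $\phi$ to be a.e.\ equal to a constant, which is the assertion.

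I expect the second paragraph to be the real obstacle: one must hit upon the logarithmic test function and recognise that it is precisely the thinness $\delta$ of the rectangle that manufactures the large factor $\log\frac{1}{\delta}$ multiplying the oscillation, all other terms remaining bounded once $\phi$ is known to lie in $L^\infty$. The uniformity of $C_1$ in $R$, $a$ and $\delta$ is exactly what makes the concluding $L^1$-closure argument go through. The remaining steps — the closed graph input with the $L^2$-powers trick, and the final slicing — are routine by comparison.
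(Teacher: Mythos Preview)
Your proof is correct. Both you and the paper exploit logarithmic test functions to manufacture a large factor multiplying the mean oscillation of $\phi$, but the implementations diverge. The paper first records the slice characterisation of $\bmo(\T^N)$ (Proposition~\ref{prop:equivdefbmo}) and constructs, for each interval $I$, a function $\log_I\in\BMO(\T)$ that is \emph{exactly constant} on $I$ (Lemma~\ref{lem:averagebmo}(i)); testing against $\log_R=\sum_k\log_{S_k}+\sum_j\log_{Q_j}$ and using the slice norm with $t\in Q$ fixed, one obtains
\[
\Bigl(\sum_k\log\tfrac{4}{|S_k|}+\sum_j\log\tfrac{4}{|Q_j|}\Bigr)\cdot\frac{1}{|S|}\int_S|\phi(s,t)-m_S\phi|\,ds\ \lesssim\ \|M_\phi\|
\]
with no error term, so letting $|Q_1|\to 0$ forces the integral to vanish immediately --- no $L^\infty$ bound on $\phi$ is ever needed. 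You instead work with the raw singular logarithm $\log\frac{1}{|\cdot-a|}$ in a single coordinate and the full-rectangle oscillation; because this function is only approximately constant on the thin side, you must first establish $\phi\in L^\infty$ (via your $L^{2n}$-powers device) to absorb the remainder, and then conclude through an $L^1$-approximation argument rather than a pointwise one. The paper's route is cleaner and avoids the preliminary boundedness step; yours is arguably more self-contained, needing neither the slice characterisation nor the bespoke construction of~$\log_I$.
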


We say a function $b\in L^2(\T^N)$ has bounded logarithmic mean oscillation on rectangles, i.e $b\in lmo(\T^N)$ if
\Beas ||b||_{*,\log,N} &:=& \sup_{R=I_1\times\cdots \times I_N\subset \T^N}\frac{\sum_{j=1}^N\log\frac{4}{|I_j|}}{|R|}\int_{R}|b(t)-m_Rb|dt\\ &<& \infty.
\Eeas
Let us introduce also the mean little $\LMO$ space in product domains.
\begin{defin}\label{meanlmo}
A function $b\in L^2(\T^N)$ belongs to $\lmo_{m}(\T^N)$ if there is a constant $C>0$ such that for any decomposition $0<N_1,N_2<N$, $N_1+N_2=N$, and any rectangle $R\subset \T^{N_1}$,
$$||m_R b||_{*,\log,N_2}\le C.$$
\end{defin}
If $C^*$ stands for the smallest constant in the Definition \ref{meanlmo}, then seen as a quotient space by the set of constants, $\lmo_m(\T^N)$ is a Banach space with the following norm
  $$||b||_{\lmo_{m}(\T^N)}:=C^*.$$

In terms of multipliers,  to get close to the one parameter situation, we need to start from $\bmo(\T^N)$ and take $\bmo_m(\T^N)$ as the target space.
\begin{thm}\label{meanbmomulti}
Let $\phi\in L^2(\T^N)$. Then the following assertions are equivalent.
\begin{itemize}
\item[(i)] $\phi$ is a multiplier from $\bmo(\T^N)$ to $\bmo_m(\T^N)$.
\item[(ii)] $\phi\in \lmo_m(\T^N)\cap L^\infty(\T^N)$.
\end{itemize}
Moreover, $$\|M_\phi\|_{\bmo(\T^N)\rightarrow \bmo_m(\T^N)}\simeq \|\phi\|_{L^\infty(\T^N)}+\|\phi\|_{\lmo_m(\T^N)}$$
where $\|M_\phi\|_{\bmo(\T^N)\rightarrow \bmo_m(\T^N)}$ is the norm of the multiplication operator from $\bmo(\T^N)$ to $\bmo_m(\T^N)$.
\end{thm}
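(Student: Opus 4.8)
The plan is to adapt the classical one-parameter characterization of multipliers of $\BMO$ (where $\phi\in\mathcal M(\BMO)$ iff $\phi\in L^\infty\cap\LMO$) to the two-parameter "mean" setting, exploiting the fact that both the source space $\bmo(\T^N)$ and the target space $\bmo_m(\T^N)$ are built by freezing one block of variables and taking iterated means.

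For the direction (ii)$\Rightarrow$(i), I would fix $\phi\in\lmo_m(\T^N)\cap L^\infty(\T^N)$ and $f\in\bmo(\T^N)$, and must estimate $\|\phi f\|_{\bmo_m(\T^N)}$. By Definition \ref{meanbmo} this amounts to bounding $\|m_R(\phi f)\|_{*,N_2}$ for every rectangle $R\subset\T^{N_1}$ and every splitting $N_1+N_2=N$. The key is to control the oscillation on a rectangle $Q\subset\T^{N_2}$ of the function $m_R(\phi f)$. I would split $\phi f - m_{R\times Q}(\phi f)$ in the usual multiplier way, writing $\phi f - m_{R\times Q}\phi\cdot m_{R\times Q}f = (\phi-m_{R\times Q}\phi)f + m_{R\times Q}\phi\,(f-m_{R\times Q}f)$, then take $m_R$ and integrate over $Q$. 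The second term is handled by $\|\phi\|_\infty$ times the $\bmo$-oscillation of $f$; the first term is where the logarithmic gain is consumed: one estimates $\frac{1}{|R\times Q|}\int_{R\times Q}|\phi - m_{R\times Q}\phi|\,|f - m_{R\times Q}f|$ by a John–Nirenberg / dyadic argument, using that $f\in\bmo(\T^N)$ forces $|m_{S}f - m_{R\times Q}f|\lesssim \|f\|_{\bmo}\,\big(\sum_j\log\frac{4}{|I_j|}\big)$ for sub-rectangles $S$, and that the $\lmo_m$ condition applied to the frozen block $R$ supplies exactly the $\big(\sum_j\log\frac{4}{|I_j|}\big)^{-1}$ factor needed to absorb it. Summing the telescoping estimate over a dyadic chain of rectangles gives the bound $\lesssim(\|\phi\|_\infty+\|\phi\|_{\lmo_m})\|f\|_{\bmo}$.

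For the direction (i)$\Rightarrow$(ii), I would test the multiplier property on explicit functions. Boundedness $\phi\in L^\infty$ follows by testing against functions that are essentially constant on large rectangles (or, more robustly, by the general principle that a multiplier of a space containing $L^\infty$-type perturbations must itself be bounded); concretely, since $1\in\bmo(\T^N)$, we get $\phi\in\bmo_m(\T^N)$, and then testing against suitable translates/modulations of a fixed bump forces the uniform bound $\|\phi\|_\infty\lesssim\|M_\phi\|$. To extract $\phi\in\lmo_m$, I would fix a splitting $N_1+N_2=N$ and a rectangle $R\subset\T^{N_1}$, and for a given rectangle $Q=J_1\times\cdots\times J_{N_2}\subset\T^{N_2}$ plug into $M_\phi$ a function of the form $f(t) = \prod_{k} \log\frac{4}{|J_k|}\cdot \mathbf{1}$-type logarithmic test function adapted to $Q$ in the last $N_2$ variables and constant in the first $N_1$ variables; such an $f$ lies in $\bmo(\T^N)$ with norm $\lesssim 1$, and computing the $\bmo_m$-norm of $\phi f$ on $R\times Q$ reproduces $\frac{\sum_k\log(4/|J_k|)}{|R\times Q|}\int_{R\times Q}|\phi - m_{R\times Q}\phi|$ up to constants, which is precisely the $\lmo_m$ quantity. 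Taking the supremum over $R$ and $Q$ yields $\|\phi\|_{\lmo_m}\lesssim\|M_\phi\|$.

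The main obstacle I anticipate is the first direction, and specifically the bookkeeping in the telescoping/John–Nirenberg step: one must choose the chain of rectangles in $\T^{N_1}\times\T^{N_2}$ so that the frozen-block structure of both $\bmo_m$ (target) and $\lmo_m$ (the hypothesis on $\phi$) is respected simultaneously, and verify that the logarithmic factors match the correct variables. A secondary delicate point is the construction of the logarithmic test functions in the converse direction: they must genuinely belong to $\bmo(\T^N)$ (full product $\BMO$-type control, not merely mean control) with norm independent of $Q$, which requires checking the oscillation over \emph{every} rectangle, including those much smaller or in "mixed" position relative to $Q$ — the standard estimate $\big\|\log\frac{1}{|t-t_0|}\big\|_{\BMO(\T)}\lesssim 1$ applied coordinatewise, together with a tensorization lemma for $\bmo$, should suffice but needs to be stated carefully.
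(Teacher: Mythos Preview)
Your overall architecture matches the paper's: test functions for (i)$\Rightarrow$(ii), and an algebraic splitting of $\phi f$ around $m_{R\times Q}\phi\cdot m_{R\times Q}f$ for (ii)$\Rightarrow$(i). Two points of divergence are worth flagging.

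\textbf{Sufficiency (ii)$\Rightarrow$(i).} The ``main obstacle'' you anticipate --- controlling the cross term $\frac{1}{|R\times Q|}\int_{R\times Q}|\phi-m_{R\times Q}\phi|\,|f-m_{R\times Q}f|$ via a John--Nirenberg/telescoping argument over dyadic chains --- is an over-complication. The paper's proof uses a clean three-term split
\[
\phi f - m_{R\times Q}\phi\, m_{R\times Q}f = (\phi-m)(f-m) + m f\cdot(\phi-m) + m\phi\cdot(f-m),
\]
and dispatches the cross term $(\phi-m)(f-m)$ in one line by the trivial bound $|\phi-m_{R\times Q}\phi|\le 2\|\phi\|_\infty$, leaving only the $\bmo$-oscillation of $f$. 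No telescoping, no John--Nirenberg. The $\lmo_m$ hypothesis is consumed \emph{only} in the term $m_{R\times Q}f\cdot(\phi-m_{R\times Q}\phi)$, where $|m_{R\times Q}f|\lesssim\big(\sum_j\log\frac{4}{|S_j|}\big)\|f\|_{\bmo_m}$ (Lemma~\ref{lem:averagebmo}(iii)) meets $\|m_R\phi\|_{*,\log,K}$ exactly. Your plan would likely go through, but the bookkeeping you worry about simply does not arise.

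\textbf{Necessity (i)$\Rightarrow$(ii).} Your idea is right in spirit, but the test function is a \emph{sum}, not a product: $\log_S(t)=\sum_{j=1}^{K}\log_{S_j}(t_j)$, built from the one-variable functions of Lemma~\ref{lem:averagebmo}(i), which lies in $\bmo(\T^N)$ with norm $\lesssim 1$ by Lemma~\ref{lem:averagebmo}(ii) and equals the constant $\sum_j\log\frac{4}{|S_j|}$ on $S$. Because it is constant on $S$, the $\bmo_m$-oscillation of $m_R(\phi\log_S)$ over $S$ is exactly $\big(\sum_j\log\frac{4}{|S_j|}\big)$ times the oscillation of $m_R\phi$, yielding $\|\phi\|_{\lmo_m}\lesssim\|M_\phi\|$ directly. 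For $\phi\in L^\infty$, the paper does \emph{not} rely on translates/modulations or bump functions as you suggest; it applies the mean estimate $|m_R(b\phi)|\lesssim\big(\sum_j\log\frac{4}{|I_j|}\big)\|M_\phi\|\,\|b\|_{\bmo}$ with $b=\log_R=\sum_j\log_{I_j}$, so that the left side becomes $\big(\sum_j\log\frac{4}{|I_j|}\big)|m_R\phi|$ and the logarithms cancel, giving $\sup_R|m_R\phi|\lesssim\|M_\phi\|$.
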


Theorem \ref{bmomultconst} and Theorem \ref{meanbmomulti} clearly establish Theorem \ref{mainembed} since\\ $\lmo_m(\T^N)\cap L^\infty(\T^N)$ contains more than constants. The proofs are given in the next section.
The last section of this note also states that the only multiplier from a Banach space of functions (strictly) containing $\bmo(\T^N)$ to $\bmo(\T^N)$ is the constant zero.

As we are dealing only with little spaces of functions of bounded mean oscillation, we essentially make use of the one parameter techniques. This is not longer possible when considering the multipliers of the product $\BMO$ of Chang-Fefferman for which one needs more demanding techniques (\cite{pseh1, pseh2, benoit}).
\section{Comparison via multiplier algebras }
\subsection{Proof of Theorem \ref{bmomultconst}}
The space $\bmo(\T^N)$ has the following equivalent definitions (\cite{cotsad, fergsad}) that we need here.
\begin{prop}\label{prop:equivdefbmo}
The following assertions are equivalent.
\begin{itemize}
\item[(1)] $b\in \bmo(\T^N)$.
\item[(2)] $b \in L^2(\T^N)$ and there exists a constant $C>0$ such that for any decomposition $N_1+N_2=N$, $0<N_1,N_2<N$,
\begin{itemize}
\item[(i)] $\|b(\cdot, t)\|_{* ,N_1}\le C$,  for all $t\in \T^{N_2}$.
\item[(ii)] $ \|b(s, \cdot)\|_{*,N_2 }     \le C$ for all $s \in \T^{N_1}$ .
 \end{itemize}

\end{itemize}
\end{prop}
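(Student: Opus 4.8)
The plan is to establish the two implications $(2)\Rightarrow(1)$ and $(1)\Rightarrow(2)$ separately; since the statement is about a quotient space, only comparability of the quantities matters and I will not track optimal constants.

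\emph{Proof of $(2)\Rightarrow(1)$.} Assume $b\in L^2(\T^N)$, fix any one decomposition $N=N_1+N_2$, and let $C$ be as in (2). Write an arbitrary rectangle $R\subset\T^N$ as $R=R_1\times R_2$ with $R_1\subset\T^{N_1}$, $R_2\subset\T^{N_2}$, and introduce the partial average $g(u):=m_{R_1}b(\cdot,u)=\frac{1}{|R_1|}\int_{R_1}b(s,u)\,ds$, a function on $\T^{N_2}$ satisfying $m_{R_2}g=m_Rb$. From $|b(s,u)-m_Rb|\le |b(s,u)-g(u)|+|g(u)-m_Rb|$ and Fubini, the normalized $R$-integral of the first term equals $\frac{1}{|R_2|}\int_{R_2}\big(\frac{1}{|R_1|}\int_{R_1}|b(s,u)-m_{R_1}b(\cdot,u)|\,ds\big)\,du$, whose inner integral is at most $\|b(\cdot,u)\|_{*,N_1}\le C$ by (i). For the second term one uses the identity $g(u)-m_Rb=\frac{1}{|R_1|}\int_{R_1}\big(b(s,u)-m_{R_2}b(s,\cdot)\big)\,ds$, so the normalized $R$-integral of $|g(u)-m_Rb|$ is at most $\frac{1}{|R_1|}\int_{R_1}\big(\frac{1}{|R_2|}\int_{R_2}|b(s,u)-m_{R_2}b(s,\cdot)|\,du\big)\,ds\le\sup_{s}\|b(s,\cdot)\|_{*,N_2}\le C$ by (ii). Adding, $\|b\|_{*,N}\le 2C$, hence $b\in\bmo(\T^N)$.

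\emph{Proof of $(1)\Rightarrow(2)$.} Let $b\in\bmo(\T^N)$; by symmetry it suffices to prove (i) for an arbitrary decomposition. Fix a rectangle $R\subset\T^{N_1}$ and put $\Phi_R(t):=\frac{1}{|R|}\int_R|b(s,t)-m_Rb(\cdot,t)|\,ds$; since $b\in L^2(\T^N)$, $\Phi_R\in L^1(\T^{N_2})$. For every rectangle $Q\subset\T^{N_2}$, the elementary bound $\Phi_R(t)\le\frac{2}{|R|}\int_R|b(s,t)-c|\,ds$ applied with $c=m_{R\times Q}b$ and then averaged in $t$ over $Q$ gives $\frac{1}{|Q|}\int_Q\Phi_R(t)\,dt\le 2\|b\|_{*,N}$; as $Q$ is arbitrary, Lebesgue differentiation yields $\Phi_R(t)\le 2\|b\|_{*,N}$ for almost every $t$ (letting $Q$ shrink to a Lebesgue point and using $m_{R\times Q}b\to m_Rb(\cdot,t)$ even removes the factor $2$). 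To obtain a single null set valid for all $R$ simultaneously, restrict the supremum defining $\|\cdot\|_{*,N_1}$ to rectangles with rational endpoints — for a.e.\ $t$ the map $R\mapsto\Phi_R(t)$ is continuous, so this restricted supremum equals the full one — and take the union of the countably many null sets. Thus $\|b(\cdot,t)\|_{*,N_1}\le 2\|b\|_{*,N}$ for a.e.\ $t$, i.e.\ for every $t$ after modifying $b$ on a null set; condition (ii) is obtained the same way.

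The only genuinely delicate point is the passage from $N$-dimensional averages to lower-dimensional slices in $(1)\Rightarrow(2)$: the Lebesgue-differentiation step, together with the measurability bookkeeping needed to make the slice estimate hold simultaneously for every rectangle. The reverse implication is a routine telescoping computation.
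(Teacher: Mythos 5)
Your proof is correct and follows essentially the same route as the paper: the direction $(2)\Rightarrow(1)$ is the identical telescoping decomposition through the partial average (with the roles of (i) and (ii) merely interchanged), and $(1)\Rightarrow(2)$ is the same idea of shrinking the rectangle in the fixed variable group to a point, which the paper carries out informally by ``letting $|S_1|\to 0$'' coordinate by coordinate. Your version of that second step, via Lebesgue differentiation plus the rational-rectangle density argument to control the exceptional null sets, is a more careful rendering of the same argument rather than a different one.
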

\begin{proof}
The proof was given in the two-parameter case in \cite{cotsad}. It is essentially the same proof in the multi-parameter setting. We follow the simplified two-parameter proof from \cite{brett}.

We first suppose that $b\in \bmo(\T^N)$ that is we have that for any $S\subset \T^{N_1}$, $K\subset \T^{N_2}$,  $N_1+N_2=N$,
$$\frac{1}{|S|}\frac{1}{|K|}\int_{S}\int_{K}|b(s,t)-m_{S\times K}b|dsdt\le \|b\|_{*,N}.$$
If $S=S_1\times\cdots\times S_{N_1}$ , then letting $|S_1|\rightarrow 0$ we get that
$$\frac{1}{|S'||K|}\int_{K}\int_{S'}|b(s,t)-m_{ K\times S'}b|dsdt\le \|b\|_{*,N},$$
for any $S'=S_2\times\cdots\times S_{N_1}\subset \T^{N_1-1}.$

Repeating this process for $S_2,\cdots,S_{N_1}$, we obtain that

$$\frac{1}{|K|}\int_{K}|b(s,t)-m_{ K}b|dt\le \|b\|_{*,N}$$
and consequently that $$\sup_{s\in \T^{N_1}}\|b(s,\cdot)\|_{*,N_2}\le \|b\|_{*,N}.$$ The same reasoning leads to $$\sup_{t\in \T^{N_2}}\|b(\cdot,t)\|_{*,N_1}\le \|b\|_{*,N}.$$

For the converse, we write $b(s,t)-m_{S\times K}b$ as follows
$$b(s,t)-m_{S\times K}b=\left(b(s,t)-m_Kb(s)\right)+\left(m_Kb(s)-m_{S\times K}b\right).$$
Hence
\begin{equation}\label{eq:decompomean}
|b(s,t)-m_{S\times K}b|\le |b(s,t)-m_Kb(s)|+|m_Kb(s)-m_{S\times K}b|.
\end{equation}
Integrating both sides of (\ref{eq:decompomean}) over $S\times K$ and with respect to the measure $\frac{dsdt}{|S||K|}$, we obtain
\Beas
L &:=& \frac{1}{|S||K|}\int_S\int_K|b(s,t)-m_{S\times K}b|dsdt\\ &\le& \frac{1}{|S||K|}\int_S\int_K|b(s,t)-m_Kb(s)|dsdt\\ & & + \frac{1}{|S||K|}\int_S\int_K|m_Kb(s)-m_{S\times K}b|dsdt\\ &=& L_1+L_2.
\Eeas
Clearly,
\Beas
L_1 &:=& \frac{1}{|S||K|}\int_S\int_K|b(s,t)-m_Kb(s)|dsdt\\ &\le& \frac{1}{|S|}\int_S\left(\frac{1}{|K|}\int_K|b(s,t)-m_Kb(s)|dt\right)ds\\ &\le&
\frac{1}{|S|}\int_S\|b(s,\cdot)\|_{*,N_2}ds\le C.
\Eeas
On the other hand,
\Beas
L_2 &:=& \frac{1}{|S||K|}\int_S\int_K|m_Kb(s)-m_{S\times K}b|dsdt\\ &=& \frac{1}{|S|}\int_S|m_Kb(s)-m_{S\times K}b|ds\\ &=&
\frac{1}{|S|}\int_S\left|\frac{1}{|K|}\int_K\left(b(s,t)-m_Sb(t)\right)dt\right|ds\\ &\le& \frac{1}{|S||K|}\int_S\int_K|b(s,t)-m_{S}b(t)|dsdt\\ &=&
\frac{1}{|K|}\int_K\left(\frac{1}{|S|}\int_S|b(s,t)-m_{S}b(t)|ds\right)dt\\ &\le& \frac{1}{|K|}\int_K\|b(\cdot,t)\|_{*,N_1}dt\le C.
\Eeas
Thus for any $S\in \T^{N_1}$ and $K\in \T^{N_2}$,
$$\frac{1}{|S||K|}\int_S\int_K|b(s,t)-m_{S\times K}b|dsdt\le 2C.$$
Hence $\|b\|_{*,N}<\infty$. The proof is complete.
\end{proof}
Note that if $C^*$ is the smallest  constant in the equivalent definition above, then $C^*$ is comparable to $\|\,\|_{\bmo(\T^N)}$.

\vskip .2cm
We make the following observation that can be proved exactly as in the one parameter case.
\begin{lem}\label{lem:equivnormbmo}
Let $b\in L^2(\T^N)$. Then
$$\|b\|_{\bmo(\T^N)}\simeq \|b\|_N^*:=\sup_{R\subset \T^N}\inf_{\lambda\in \mathbb C}\frac{1}{|R|}\int_{R}|b(t)-\lambda|dt.$$
\end{lem}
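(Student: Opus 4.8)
The plan is to prove the two inequalities $\|b\|_N^* \le \|b\|_{\bmo(\T^N)}$ and $\|b\|_{\bmo(\T^N)} \le 2\|b\|_N^*$ separately, exactly as one does in the classical one-parameter John--Nirenberg style argument. One direction is immediate: for any rectangle $R\subset\T^N$, the choice $\lambda = m_R b$ is admissible in the infimum defining $\|b\|_N^*$, so
\Bes
\inf_{\lambda\in\C}\frac{1}{|R|}\int_R|b(t)-\lambda|\,dt \le \frac{1}{|R|}\int_R|b(t)-m_Rb|\,dt \le \|b\|_{*,N},
\Ees
and taking the supremum over $R$ gives $\|b\|_N^* \le \|b\|_{*,N} = \|b\|_{\bmo(\T^N)}$.

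For the reverse direction, I would fix a rectangle $R$ and an arbitrary $\lambda\in\C$, and use the triangle inequality together with the fact that $m_R$ is an average (hence $m_R\lambda = \lambda$ and $|m_R g|\le m_R|g|$). Writing
\Bes
|b(t) - m_Rb| \le |b(t)-\lambda| + |\lambda - m_Rb| = |b(t)-\lambda| + |m_R(\lambda - b)| \le |b(t)-\lambda| + m_R|b-\lambda|,
\Ees
and averaging over $R$ with respect to $\frac{dt}{|R|}$ yields
\Bes
\frac{1}{|R|}\int_R|b(t)-m_Rb|\,dt \le 2\,\frac{1}{|R|}\int_R|b(t)-\lambda|\,dt.
\Ees
Since $\lambda$ was arbitrary, the right-hand side can be replaced by $2\inf_{\lambda}\frac{1}{|R|}\int_R|b-\lambda|\,dt \le 2\|b\|_N^*$; taking the supremum over $R$ gives $\|b\|_{*,N}\le 2\|b\|_N^*$. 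Combining the two bounds establishes the comparability $\|b\|_{\bmo(\T^N)}\simeq\|b\|_N^*$.

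There is essentially no obstacle here; the statement and its proof are genuinely identical to the one-parameter case, the only point being that nothing about the argument uses the structure of the underlying measure space beyond the fact that $\frac{dt}{|R|}$ is a probability measure on $R$ and that constants integrate to themselves. The multi-parameter / rectangular nature of $R$ plays no role at all in this particular lemma. If one wants the finiteness clause (i.e. that $b\in L^2$ with $\|b\|_N^*<\infty$ implies $\|b\|_{*,N}<\infty$ and conversely), it is subsumed in the two-sided inequality just proved. I would simply remark that the proof is the standard one and omit further detail, as the paper already indicates ("can be proved exactly as in the one parameter case").
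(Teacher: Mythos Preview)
Your proof is correct and is exactly the standard one-parameter argument the paper alludes to; since the paper omits the proof entirely (merely noting it ``can be proved exactly as in the one parameter case''), your write-up supplies precisely what is intended.
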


Let us also observe the following.
\begin{lem}\label{lem:averagebmo}
The following assertions hold.
\begin{itemize}
\item[(i)] Given an interval $I$ in $\T$, there is a function in $\BMO(\T)$, denoted $\log_I$  such that
\begin{itemize}
\item the restriction of $\log_I$ to $I$ is $\log\frac{4}{|I|}$.
\item $\|\log_I\|_{\BMO(\T)}\le C$ where $C$ is a constant that does not depend on $I$.
\end{itemize}
\item[(ii)] For any $f_1,\cdots,f_N\in \BMO(\T)$, the function\\ $b(t_1,\cdots,t_N)=\sum_{j=1}^Nf_j(t_j)$ belongs to $\bmo(\T^N)$. Moreover,
$$\|b\|_{\bmo(\T^N)}\le \sum_{j=1}^N\|f_j\|_{\BMO(\T)}.$$
\item[(iii)] There is a constant $C>0$ such that for any $b\in \bmo_m(\T^N)$ and any rectangle $R=I_1\times\cdots\times I_N\subset \T^N$,
\begin{equation}\label{eq:averagebmo}
|m_R b|\le C\left(\log\frac{4}{|I_1|}+\cdots+\log\frac{4}{|I_N|}\right)\|b\|_{\bmo_m(\T^N)},
\end{equation}
and this is sharp.
\end{itemize}
\end{lem}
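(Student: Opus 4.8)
\emph{Overview.} The plan is to settle (i) and (ii) with elementary one‑parameter facts and to reduce (iii) to a telescoping argument over the rectangle $R$, the only real subtlety being a degenerate‑rectangle limiting step.

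\emph{Parts (i) and (ii).} For (i) let $c$ be the center of $I$, set $\delta=|I|$, and put $\log_I(t):=\min\bigl(\log\frac{4}{|t-c|},\log\frac{4}{\delta}\bigr)$ with $|t-c|$ the arc distance on $\T$. On $I$ one has $|t-c|\le\delta/2<\delta$, so $\log_I\equiv\log\frac{4}{\delta}$ there. The function $t\mapsto\log\frac{1}{|t-c|}$ is a translate of the classical unbounded $\BMO(\T)$ logarithm, hence has $\BMO(\T)$ norm bounded by an absolute $C_0$, and adding the constant $\log 4$ is harmless; since $\min(u,\mu)=\mu-(\mu-u)^+$ and $x\mapsto x^+$ (equivalently $x\mapsto\min(x,\mu)$) is $1$‑Lipschitz, one gets $\inf_\lambda\frac{1}{|J|}\int_J|\log_I-\lambda|\le\frac{1}{|J|}\int_J|u-m_Ju|$ for every interval $J$, where $u(t)=\log\frac{4}{|t-c|}$, and Lemma~\ref{lem:equivnormbmo} then yields $\|\log_I\|_{\BMO(\T)}\le C$ uniformly in $I$. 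For (ii) each $f_j\in\BMO(\T)\subset L^2(\T)$, so $b\in L^2(\T^N)$; for $R=I_1\times\cdots\times I_N$, Fubini gives $m_Rb=\sum_jm_{I_j}f_j$, whence $b(t)-m_Rb=\sum_j\bigl(f_j(t_j)-m_{I_j}f_j\bigr)$, and integrating $|b-m_Rb|$ over $R$ against $dt/|R|$,
\[
\frac{1}{|R|}\int_R|b-m_Rb|\,dt\le\sum_{j=1}^N\frac{1}{|I_j|}\int_{I_j}|f_j-m_{I_j}f_j|\,dt_j\le\sum_{j=1}^N\|f_j\|_{\BMO(\T)};
\]
taking $\sup_R$ proves (ii).

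\emph{Part (iii), the inequality.} Since $\|\cdot\|_{\bmo_m}$ ignores additive constants, (iii) is only meaningful for the normalisation $m_{\T^N}b=0$, which I impose (for $N=1$ it reduces to the classical bound $|m_Ig-m_\T g|\lesssim\log\frac{4}{|I|}\|g\|_{\BMO(\T)}$, so assume $N\ge 2$). Put $B_k:=m_{I_1\times\cdots\times I_k\times\T^{N-k}}b$, so $B_0=0$, $B_N=m_Rb$, and $m_Rb=\sum_{k=1}^N(B_k-B_{k-1})$. By Fubini $B_k-B_{k-1}=m_{I_k}f_k-m_\T f_k$, where $f_k(t_k):=m_{R'_k}b$ and $R'_k:=I_1\times\cdots\times I_{k-1}\times\T^{N-k}$ is a rectangle in the $N-1$ variables $\{t_i:i\ne k\}$. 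Definition~\ref{meanbmo} with $N_1=N-1$, $N_2=1$ gives $\|f_k\|_{\BMO(\T)}=\|m_{R'_k}b\|_{*,1}\le C\|b\|_{\bmo_m(\T^N)}$; the slots of $R'_k$ equal to $\T$ are handled either by admitting degenerate rectangles in Definition~\ref{meanbmo}, or by exhausting $\T$ from inside by intervals (the corresponding averages converge in $L^2(\T)$, while $f\mapsto\inf_\lambda\frac{1}{|J|}\int_J|f-\lambda|$ is $L^2$‑continuous for each fixed $J$) and passing to the limit. Applying the classical one‑variable estimate $|m_{I_k}g-m_\T g|\le C\log\frac{4}{|I_k|}\|g\|_{\BMO(\T)}$ to $g=f_k$ and summing over $k$ gives \eqref{eq:averagebmo}.

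\emph{Part (iii), sharpness, and the main difficulty.} For sharpness take $|I_j|=\delta$ small, $b=\sum_{j=1}^N\log_{I_j}(t_j)$, and $R=I_1\times\cdots\times I_N$: by (i)--(ii), $\|b-m_{\T^N}b\|_{\bmo_m(\T^N)}\le\|b\|_{\bmo(\T^N)}\le NC_0$, whereas $m_R(b-m_{\T^N}b)=\sum_j\bigl(\log\frac{4}{\delta}-m_\T\log_{I_j}\bigr)=N\log\frac{4}{\delta}-O(1)$, because $0<m_\T\log_{I_j}\le\int_\T\log\frac{4}{|t-c_j|}\,dt$ is an absolute constant; hence $|m_R(b-m_{\T^N}b)|\gtrsim_N\sum_j\log\frac{4}{|I_j|}$, so neither the logarithmic growth nor the full sum over $j$ on the right of \eqref{eq:averagebmo} can be weakened. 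The only genuinely delicate point is in the proof of the inequality in (iii): one must recognise that (iii) is to be read with $m_{\T^N}b=0$ fixed, and, more to the point, one must control the partial averages $f_k$ — which integrate $b$ over the whole circle in several coordinates — purely in terms of $\|b\|_{\bmo_m(\T^N)}$; this degenerate‑rectangle limiting step is where one steps slightly outside a literal reading of Definition~\ref{meanbmo}, everything else being one‑parameter bookkeeping.
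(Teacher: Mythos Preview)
Your proof is correct, and the overall strategy (reduce to one–parameter facts, test sharpness with $\sum_j\log_{I_j}$) matches the paper's, but two ingredients are handled differently.

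For (i) the paper builds $\log_J$ as an explicit step function on dyadic annuli $U_k=J_k\setminus J_{k-1}$, $|J_k|=2^k|J|$, namely $\log_J=\sum_{k=0}^{N}(N+2-k)\chi_{U_k}$, and then verifies the $\BMO$ bound by a direct combinatorial estimate on each test interval. Your construction $\log_I=\min\bigl(\log\tfrac{4}{|t-c|},\log\tfrac{4}{|I|}\bigr)$ is slicker: it invokes only that $\log\tfrac{1}{|t|}\in\BMO(\T)$ and that $1$-Lipschitz truncations preserve the $\BMO$ norm, avoiding any case analysis. Both produce functions equal to the desired constant on $I$ with uniformly bounded $\BMO$ norm; the paper's version has the minor advantage of being bounded and compactly describable, while yours plugs directly into standard $\BMO$ lore.

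For (iii) the paper applies the one–parameter mean estimate once, writing $|m_I(m_Qb)|\lesssim\log\tfrac{4}{|I|}\,\|m_Qb\|_{*,1}$ for $Q\subset\T^{N-1}$ and concluding the sum bound ``in particular''. You instead telescope $m_Rb=\sum_{k=1}^N(B_k-B_{k-1})$ with $B_k=m_{I_1\times\cdots\times I_k\times\T^{N-k}}b$, picking up exactly one factor $\log\tfrac{4}{|I_k|}$ at each step. What your route buys is a clean treatment of the normalisation issue: the one–parameter inequality $|m_Ig|\lesssim\log\tfrac{4}{|I|}\|g\|_{\BMO}$ needs $m_\T g=0$, and for $g=m_Qb$ this does \emph{not} follow from $m_{\T^N}b=0$; your telescoping only ever uses the difference form $|m_{I_k}f_k-m_\T f_k|\lesssim\log\tfrac{4}{|I_k|}\|f_k\|_{\BMO}$, which needs no such hypothesis. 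The degenerate–rectangle issue you flag (some sides of $R'_k$ equal to $\T$) is handled adequately by your limiting argument, and in any case the paper's own single–step argument implicitly uses the same convention that $\T$ is an admissible interval. Part (ii) and the sharpness example are identical in both proofs.
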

\begin{proof}
Assertion $(\textrm{ii})$ follows directly from the definition of $\bmo(\T^N)$.

$(\textrm{i})$ is surely well known, we give a proof here for completeness: 
let $J$ be a fixed interval in $\mathbb T$. Let $J_0=J$ and
$J_k$ be the intervals in $\mathbb T$ with the same center
as $J$ and such that $|J_k|=2^k|J|$, here
$k=1,2,\cdots,N-1$ and $N$ is the smallest integer such
that $2^{N}|J|\ge 1$. We define $J_{N}=\mathbb {T}$. Thus,
$$N\le \log_2\frac{4}{|J|}\le N+2.$$
Next, we define $U_0=J_0=J$,
$U_k=J_{k}\setminus J_{k-1}$, for $k=1,\cdots,N$. Now
consider the function $\log_J$ defined on $\mathbb T$ by
\Be\label{test} \log_J(t)=\sum_{k=0}^{N}(N+2-k)\chi_{U_k}(t),\,\,\,t\in \T.\Ee
Clearly, $$\log_J(t)= N+2\simeq \log_{2}\frac{4}{|J|}\,\,\,
\textrm{for all}\,\,\, t\in J.$$
\begin{lem}\label{testdimone}
For each interval $J\subset \mathbb {T}$, the function
$\log_J$ defined by (\ref {test}) belongs to $BMO(\mathbb
T)$.
\end{lem}
\begin{proof} We start by  estimating the $L^2$-norm of $\log_J$. We have
\begin{eqnarray*}||\log_J||_2^2 &=&\sum_{k=0}^{N}(N+2-k)^2|U_k| =
\sum_{k=2}^{N+2}k^2|J_{N+2-k}|\\ &\le& \sum_{k=1}^{N+2}k^2
2^{N+2-k}|J| \le \sum_{k=1}^{N+2}k^2 2^{N+2-k}2^{1-N}\\
&=& 8\sum_{k=1}^{N+2}k^2 2^{-k}.\end{eqnarray*} It is clear
that the last sum in the above equalities is finite and so
$\log_J\in L^2(\mathbb T)$.

For any dyadic interval $I\subset \mathbb {T}$, let $m\in
\{0,\cdots,N+1\}$ be minimal such that $I\cap U_m\neq
\emptyset$, and $l\in \{0,\cdots,N+1\}$ be maximal such that
$I\cap U_{m+l}\neq \emptyset$. Let us estimate
 the length of $I\cap U_j$ for any $m\le j\le m+l$.

If $l=1$ then $I\cap U_m=I$ and there is nothing to say.
If $l=2$ then $|I\cap U_m|\le |I|$ and $|I\cap U_{m+1}|\le
|I|$.

\vskip .2cm
Next, we consider the case $l\ge 2$. We remark that
in this case, half of $U_{m+l-1}$ is contained in $I$. Consequently,
for any $m\le j< m+l$, we have $|I\cap U_j|\le
2\frac{1}{2^{m+l-j-1}}|I|$. Finally, we have $$|I\cap
U_{m+l}|\le 2|I\cap U_{m+l-1}|\le 2|I|.$$ Hence, \Beas
L &:=& \frac{1}{|I|}\int_I|\log_J-(N+2-m-l)|dt\\ &=&
\frac{1}{|I|}\int_I|\sum_{k=m}^{m+l}(m+l-k)\chi_{U_k}|dt\\
&\le& \frac{1}{|I|}\sum_{k=m}^{m+l}(m+l-k)|I\cap U_k|\\
&\le& 4\frac{1}{|I|}\sum_{k=m}^{m+l}(m+l-k)2^{-m-l+k}|I|\\
&=& 4\sum_{k=0}^{k=l}\frac{k}{2^k}\le 6.\Eeas Thus, for
each interval $J\subset \T$, the function $\log_J$ given
by (\ref{test}) belongs to $BMO(\mathbb T)$ and there
exists a positive constant $C$ independent of $J$ such that
$||\log_J||_{BMO(\T)}\le C$.

\vskip .2cm
To prove $(\textrm{iii})$, we observe that by definition, given $b\in \bmo_m(\T^N)$, for any rectangle $S\subset \T^K$, $0<K<N$, $\|m_Sb\|_{*,N-K}$ is uniformly bounded.  It follows from the one parameter estimate of the mean of a function of bounded mean oscillation and the definition of $\bmo_m(\T^N)$ that for any rectangle $Q\subset \T^{N-1}$,
\Beas |m_I(m_Q b)| &\lesssim& \left(\log\frac{4}{|I|}\right)||m_Q b||_{*,1}\\ &\lesssim& \left(\log\frac{4}{|I|}\right)||b||_{\bmo_m(\T^N)}.\Eeas
In particular, for any rectangle $R=I_1\times\cdots\times I_N\subset \T^N$, we have
$$|m_R b|\le C\left(\sum_{j=1}^N\log\frac{4}{|I_j|}\right)\|b\|_{\bmo_m(\T^N)}.$$
The sharpness follows by applying the last inequality to the function\\ $\log_R(t_1,\cdots, t_N)=\sum_{j=1}^N\log_{I_j}(t_j)$, $R=I_1\times\cdots\times I_N$, and using $(\textrm{ii})$.\\ 
The proof is complete.
\end{proof}
Lemma \ref{testdimone} and its proof complete the proof of Lemma \ref{lem:averagebmo}.
\end{proof}
We now reformulate and prove Theorem \ref{bmomultconst}.
\begin{thm}\label{bmomultconst2para}
Let $\phi\in L^2(\T^N)$. Then the following assertions are equivalent.
\begin{itemize}
\item[(a)] $\phi$ is multiplier of $bmo(\T^N)$.
\item[(b)] $\phi$ is a constant.
\end{itemize}
\end{thm}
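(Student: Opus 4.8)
The plan is to show that a pointwise multiplier of $\bmo(\T^N)$ must be bounded and of bounded logarithmic mean oscillation on rectangles, and then to leverage the multi-parameter structure to force $\phi$ to be constant. First I would observe that by Lemma~\ref{lem:equivnormbmo} one has the standard two obvious necessary conditions: testing $M_\phi$ on the constant function $\eins$ gives $\phi\in\bmo(\T^N)$, and a closed-graph/uniform-boundedness argument shows that $M_\phi$ is bounded on $\bmo(\T^N)$. The key extra ingredient is that $\phi$ must be in $L^\infty(\T^N)$: this follows by the classical one-parameter trick adapted here — feed $M_\phi$ the function $\log_R$ of Lemma~\ref{lem:averagebmo}(i)--(ii) (a sum $\sum_j\log_{I_j}(t_j)$, which lies in $\bmo(\T^N)$ with norm $O(1)$), and compare $m_R(\phi\log_R)$ against $\|\phi\log_R\|_{\bmo(\T^N)}\lesssim \|M_\phi\|$; since $\log_R\simeq \sum_j\log(4/|I_j|)$ on $R$, localizing this to shrinking rectangles around a Lebesgue point of $\phi$ yields a pointwise bound $|\phi|\lesssim \|M_\phi\|$, so $\phi\in L^\infty$.

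Next I would extract the logarithmic mean oscillation. Writing, for a rectangle $R$,
\[
\frac{1}{|R|}\int_R |\phi(t)\log_R(t) - m_R(\phi\log_R)|\,dt \le \|\phi\log_R\|_{\bmo(\T^N)}\lesssim \|M_\phi\|,
\]
and using that $\log_R$ is essentially the constant $\sum_j\log(4/|I_j|)$ on $R$, the left-hand side is, up to controlled error terms coming from $\|\log_R\|$ and the boundedness of $\phi$, comparable to $\big(\sum_j\log\frac{4}{|I_j|}\big)\frac{1}{|R|}\int_R|\phi-m_R\phi|\,dt$. Hence $\phi\in\lmo(\T^N)$, i.e.
\[
\sup_{R}\frac{\sum_{j=1}^N\log\frac{4}{|I_j|}}{|R|}\int_R|\phi-m_R\phi|\,dt<\infty.
\]
So far this reproduces the expected "multipliers of $\bmo$ are $\lmo\cap L^\infty$"; the real work is to upgrade $\lmo\cap L^\infty$ to constants in this little/mean setting. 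For that I would exploit the slicing structure from Proposition~\ref{prop:equivdefbmo}: because the rectangle $R=I_1\times\cdots\times I_N$ may have some side lengths comparable to $1$ while others tend to $0$, the $\lmo$ condition, applied with all but one $|I_j|$ bounded below, forces on almost every slice the one-variable bound $\log\frac{4}{|I_j|}\cdot\frac{1}{|I_j|}\int_{I_j}|\phi(\dots,t_j,\dots)-m_{I_j}\phi|\,dt_j\lesssim 1$ uniformly, i.e. each slice function $t_j\mapsto\phi$ is in $\LMO(\T)\cap L^\infty(\T)$ with uniform norm.

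The final step is the one-parameter fact that $\LMO(\T)\cap L^\infty(\T)$ — or more precisely, the condition that the multiplication operator $M_\phi$ is bounded on $\BMO(\T)$ together with $\phi\in L^\infty$ iterated over all coordinate slices — rigidly constrains $\phi$. Here I would argue directly: being in $\LMO(\T)$ forces the mean oscillation of $\phi$ over an interval $I$ to decay like $1/\log\frac{4}{|I|}$, so along any coordinate $\phi$ is "VMO with a logarithmic rate"; combined with the fact that this holds in every variable uniformly and with the product/rectangular averaging, a telescoping argument over a grid of small rectangles shows that $\phi$ can oscillate by at most $o(1)$ across all of $\T^N$, hence $\phi$ is a.e. constant. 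Conversely, a constant is trivially a multiplier. I expect the \textbf{main obstacle} to be this last rigidity step: carefully passing from the scalar $\lmo$ bound on arbitrary rectangles to the pointwise statement "$\phi$ is constant", since one must handle the interplay between the logarithmic gain and the $L^\infty$ bound across all $N$ coordinates simultaneously without losing uniformity — this is exactly where the "little" (mean) nature of the space, as opposed to product $\BMO$, is essential and makes the one-parameter techniques suffice.
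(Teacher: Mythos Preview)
Your Steps 1--2 are fine and essentially reproduce the computation the paper carries out: testing $M_\phi$ on $\log_R$ and using that $\log_R$ is literally the constant $\sum_j\log\frac{4}{|I_j|}$ on $R$ gives $\phi\in L^\infty\cap\lmo(\T^N)$. The real issue is your Step 4, which you yourself flag as the main obstacle, and the resolution you propose does not work.

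The claim that uniform membership of the one-variable slices in $\LMO(\T)\cap L^\infty(\T)$ forces constancy is false: by Stegenga's theorem (which the paper cites), $\LMO(\T)\cap L^\infty(\T)$ is exactly the multiplier algebra of $\BMO(\T)$ and contains plenty of nonconstant functions. Your telescoping heuristic also fails quantitatively: the $\LMO$ bound gives oscillation $O\bigl(1/\log\frac{4}{|I|}\bigr)$ over an interval $I$, and chaining over $\sim\log\frac{1}{|I|}$ dyadic doublings yields a bound of order $\sum_k 1/k$, which diverges rather than being $o(1)$. So nothing in your argument excludes, say, $\phi(s,t)=\psi(s)$ with $\psi\in\LMO(\T)\cap L^\infty(\T)$ nonconstant --- every slice of this $\phi$ is uniformly in $\LMO\cap L^\infty$.

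What you are missing, and what the paper exploits directly, is the \emph{cross-variable} logarithmic weight. The paper does not pass through $\lmo(\T^N)$; instead it uses the slice characterization of $\bmo$ (Proposition~\ref{prop:equivdefbmo}): for fixed $t\in Q\subset\T^{N_2}$ one has
\[
\Bigl(\sum_{k}\log\tfrac{4}{|S_k|}+\sum_{j}\log\tfrac{4}{|Q_j|}\Bigr)\,\frac{1}{|S|}\int_S|\phi(s,t)-m_S\phi(\cdot,t)|\,ds\ \lesssim\ \|M_\phi\|,
\]
where the oscillation is in the $s$-variables only but the weight contains $\log\frac{4}{|Q_j|}$ from the \emph{other} variables. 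Since $S$ and $Q$ are independent, sending $|Q_1|\to 0$ blows up the left side unless the $S$-oscillation of $\phi(\cdot,t)$ vanishes for every $S$, i.e.\ $\phi$ is constant in $s$. Iterating over all splittings gives the result. If you want to stay within your $\lmo$ route, the same idea salvages it: from $\phi\in\lmo(\T^N)$ and Jensen one gets, for any interval $I\subset\T$ and rectangle $Q\subset\T^{N-1}$, that $\|m_Q\phi\|_{\BMO(\T)}\lesssim \bigl(\sum_{j\ge 2}\log\frac{4}{|Q_j|}\bigr)^{-1}$; letting $|Q_j|\to 0$ shows each $m_Q\phi$, hence each slice, is constant.
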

\begin{proof}
Clearly, $(\textrm{b})\Rightarrow (\textrm{a})$. We prove that $(\textrm{a})\Rightarrow (\textrm{b})$.

Assume that $\phi\in L^2(\T^N)$ is a multiplier of $\bmo(\T^N)$. Then for any $f\in \bmo(\T^N)$, and any integer $0<N_1<N$, $N_2=N-N_1$, $||(\phi f)(.,t)||_{*,N_1}$ is uniformly bounded for all $t\in \T^{N_2}$ fixed and $\|(\phi f)(.,t)\|_{*,N_1}\le \|\phi f\|_{\bmo(\T^N)}$. Let us take as $f$ the function $f(s,t)=\log_R(s,t)=\sum_{k=1}^{N_1}\log_{S_k}(s)+\sum_{j=1}^{N_2}\log_{Q_j}(t)$, $R=S\times Q\subset \T^{N_1}\times \T^{N_2}$, $S=S_1\times\cdots\times S_{N_1}$, $Q=Q_1\times\cdots\times Q_{N_2}$, $S_k\subset \T,\,\,\,Q_j\subset \T$.
Then it follows that
$$\frac{1}{|S|}\int_{S}|\phi(s,t)f(s,t)-m_S\left(\phi f\right)|ds\le \|\phi f\|_{\bmo(\T^N)}, \textrm{for all}\,\,\, S\subset \T^{N_1}.$$
But from assertion $(\textrm{i})$ of Lemma \ref{lem:averagebmo} we have that for any $t\in Q\subset \T^{N_2}$ fixed,

\Beas
L &:=& \frac{\sum_{j=1}^{N_2}\log\frac{4}{|Q_j|}+\sum_{k=1}^{N_1}\log\frac{4}{|S_k|}}{|S|}\int_{S}|\phi(s,t)-m_S\phi|ds\\ &\lesssim& 
\frac{1}{|S|}\int_{S}|\phi(s,t)\log_R(s,t)-m_S(\phi\log_R)|ds\\ &\le& ||\phi\log_R||_{\bmo(\T^N)}\\ &\lesssim& \|M_\phi\|,
\Eeas
where $\|M_\phi\|$ is the norm of the multiplication by $\phi$, $M_\phi (f)=\phi f$.

Hence for any $S\subset \T^{N_1},\,\,\,Q\subset \T^{N_2}$ and $t\in Q$,
\Be\label{eq:constequat1}\left(\sum_{j=1}^{N_2}\log\frac{4}{|Q_j|}+\sum_{k=1}^{N_1}\log\frac{4}{|S_k|}\right)\left(\frac{1}{|S|}\int_{S}|\phi(s,t)-m_S\phi|ds\right)<\infty.
\Ee

Letting for example $|Q_1|\rightarrow 0$ in (\ref{eq:constequat1}), we see that necessarily,\\ $\phi(s,t)=\phi(t)$ for any $s\in S\subset \T^{N_1}$. As $N_1$ runs through $(0,N)$, we obtain\\ that for any $(t_1,\cdots,t_N)\in \T^N$,
$$\phi(t_1,\cdots,t_N)=\phi(t_j)=\phi(t_{j_1},\cdots,t_{j_k}),\,\,\,j,j_l\in \{1,\cdots,N\},$$
$0<k<N.$
The latter gives that $\phi$ is a constant.
\end{proof}
We have the following consequence which says that the only bounded functions in $\lmo(\T^N)$ are the constants. This is pretty different from the one parameter case (\cite{stegenga}).
\begin{cor}\label{cor:cormain}
Assume that $\phi\in L^\infty(\T^N)$ and
\Be\label{charact01}
||\phi||_{*,\log,N}:=\sup_{R=I_1\times \cdots\times I_N\subset \T^N}\frac{\sum_{j=1}^N\log\frac{4}{|I_j|}}{|R|}\int_{R}|\phi(t)-m_R\phi|dt<\infty.
\Ee
Then $\phi$ is a constant.
\end{cor}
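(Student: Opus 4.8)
The plan is to derive Corollary \ref{cor:cormain} as a direct consequence of Theorem \ref{bmomultconst2para} by exhibiting the hypothesis on $\phi$ as a multiplier condition. Concretely, I claim that if $\phi\in L^\infty(\T^N)$ satisfies \eqref{charact01}, then $\phi$ is a pointwise multiplier of $\bmo(\T^N)$; once this is established, Theorem \ref{bmomultconst2para} forces $\phi$ to be constant, and we are done. So the entire content of the argument is the implication ``$\phi\in L^\infty\cap\lmo(\T^N)\Rightarrow \phi\in\mult(\bmo(\T^N))$''.

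To prove this implication I would fix $b\in\bmo(\T^N)$ and a rectangle $R=I_1\times\cdots\times I_N$, and estimate $\frac{1}{|R|}\int_R|\phi b-\lambda|$ for a well-chosen constant $\lambda$, using Lemma \ref{lem:equivnormbmo} to reduce to this infimum form. The natural choice is $\lambda=(m_R\phi)(m_Rb)$, and the decomposition
\[
\phi(t)b(t)-(m_R\phi)(m_Rb)=\big(\phi(t)-m_R\phi\big)b(t)+(m_R\phi)\big(b(t)-m_Rb\big).
\]
The second term contributes at most $\|\phi\|_{L^\infty}\|b\|_{\bmo(\T^N)}$ after averaging. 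For the first term, one writes $b(t)=\big(b(t)-m_Rb\big)+m_Rb$ and splits again: the piece $(\phi-m_R\phi)(b-m_Rb)$ is handled by a John--Nirenberg / Cauchy--Schwarz type argument (or an $L^2$--$L^2$ bound using that both factors lie in the relevant oscillation classes on $R$), while the piece $(\phi-m_R\phi)\,m_Rb$ is controlled by
\[
|m_Rb|\cdot\frac{1}{|R|}\int_R|\phi-m_R\phi|\;\lesssim\;\Big(\sum_{j=1}^N\log\tfrac{4}{|I_j|}\Big)\|b\|_{\bmo_m(\T^N)}\cdot\frac{1}{|R|}\int_R|\phi-m_R\phi|,
\]
using Lemma \ref{lem:averagebmo}(iii) (valid since $\bmo(\T^N)\subset\bmo_m(\T^N)$), and the last product is $\lesssim\|\phi\|_{*,\log,N}\|b\|_{\bmo(\T^N)}$ exactly by hypothesis \eqref{charact01}. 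Combining the three pieces gives $\|\phi b\|_{\bmo(\T^N)}\lesssim\big(\|\phi\|_{L^\infty}+\|\phi\|_{*,\log,N}\big)\|b\|_{\bmo(\T^N)}$, so $\phi$ is a multiplier.

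The step I expect to be the main obstacle is the cross term $\frac{1}{|R|}\int_R|\phi-m_R\phi|\,|b-m_Rb|$: controlling it requires simultaneous exponential integrability of $b$ and $\phi$ on $R$, which for $b$ is the usual John--Nirenberg inequality but for $\phi$ must be extracted either from $\phi\in L^\infty$ (trivially) or from the logarithmic oscillation bound; using $\|\phi\|_{L^\infty}$ for this piece is the clean route, giving the bound $\|\phi\|_{L^\infty}\|b\|_{\bmo(\T^N)}$ directly and making the $\lmo$ hypothesis needed only for the term carrying the factor $m_Rb$. One should double-check that the multi-parameter John--Nirenberg estimate for $\bmo(\T^N)$ on a single rectangle $R$ is available — it follows from Proposition \ref{prop:equivdefbmo} by iterating the one-parameter inequality slice by slice — and that the constants do not degenerate as any $|I_j|\to 0$. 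Once the multiplier property is in hand, the conclusion is immediate from Theorem \ref{bmomultconst2para}.
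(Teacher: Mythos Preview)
Your proposal is correct and follows essentially the same route as the paper: show that $\phi\in L^\infty\cap\lmo(\T^N)$ implies $\phi\in\mathcal M(\bmo(\T^N))$ via the constant $\lambda=(m_R\phi)(m_Rb)$ and Lemma~\ref{lem:equivnormbmo}, then invoke Theorem~\ref{bmomultconst2para}. The only difference is that the paper swaps the roles in the algebraic split, writing $\phi b-(m_R\phi)(m_Rb)=\phi(b-m_Rb)+(m_Rb)(\phi-m_R\phi)$, which yields just two terms and avoids the cross term $(\phi-m_R\phi)(b-m_Rb)$ altogether---so no John--Nirenberg discussion is needed and your ``main obstacle'' simply does not arise.
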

\begin{proof} Following Theorem \ref{bmomultconst} we only need to prove that any bounded function $\phi$ which satisfies (\ref{charact01}) is a multiplier of $\bmo(\T^N)$. For this
we first recall that if $f\in \bmo(\T^N)$, then for any rectangle $R=I_1\times\cdots\times I_N\subset \T^N$, we have the estimate

\begin{equation*}
|m_R f|\lesssim \left(\log\frac{4}{|I_1|}+\cdots+\log\frac{4}{|I_N|}\right)||f||_{\bmo(\T^N)}.
\end{equation*}
Now assume that $\phi\in L^\infty(\T^N)$ and satisfies (\ref{charact01}), and let $f\in \bmo(\T^N)$. Then using the above estimate, we obtain for any $R=I_1\times\cdots\times I_N\subset \T^N$,
\Beas
&&\frac{1}{|R|}\int_{R}|(f\phi)(t)-m_R\phi m_Rf|dt\\ &\le& \frac{1}{|R|}\int_{R}|\phi(t)||f(t)-m_Rf|dt+\\ & &
\frac{1}{|R|}\int_{R}|m_Rf||\phi(t)-m_R\phi|dt\\ &\le& \frac{||\phi||_{L^\infty(\T^N)}}{|R|}\int_{R}|f(t)-m_Rf|dt+\\ & & \frac{||f||_{\bmo(\T^N)}\left(\log\frac{4}{|I_1|}+\cdots+\log\frac{4}{|I_N|}\right)}{|R|}\int_{R}|\phi(t)-m_R\phi|dt\\ &\le& \left(||\phi||_{L^\infty(\T^N)}+||\phi||_{*,\log,N}\right)||f||_{\bmo(\T^N)}.
\Eeas
It follows from the latter and Lemma \ref{lem:equivnormbmo} that if $\phi$ is bounded and satisfies (\ref{charact01}), then for any $f\in \bmo(\T^N)$, $\phi f$ belongs to $\bmo(\T^N)$. That is $\phi$ is a multiplier of $\bmo(\T^N)$.
The proof is complete.
\end{proof}

\begin{rem}\label{remark2} Let us first recall that in the one parameter case, it is a result of D. Stegenga \cite{stegenga} that $L^\infty(\T)\cap \LMO(\T)$ is the exact range of pointwise multipliers of $\BMO(\T)$. Let us define another little $\LMO$ space in the two-parameter case $\lmo_{inv}(\T^2)$ as follows.
\begin{defin}
A function $b\in L^2(\T^2)$ is in $\lmo_{inv}(\T^2)$ if there is a constant $C>0$ such that
$\|b(\cdot,t)\|_{*,\log,1}\le C$ for all $t\in \T$ and $\|b(s,\cdot)\|_{*,\log,1}\le C$ for all $s\in \T$.
\end{defin}

Clearly, $\lmo_{inv}(\T^2)$ is a subspace of $\lmo_m(\T^2)$. The one parameter intuition and the equivalent definition of $\bmo(\T^2)$ in Proposition \ref{prop:equivdefbmo} may lead one to claim that any function $\phi\in L^\infty(\T^2)\cap \lmo_{inv}(\T^2)$ is a multiplier of $\bmo(\T^2)$. This is not the case as the above results show and since $L^\infty(\T^2)\cap \lmo_{inv}(\T^2)$ contains more than constants. For example, for any $\phi_1,\phi_2\in L^\infty(\T)\cap \LMO(\T)$, the function $\phi:\,\, (s,t)\mapsto \phi_1(s)\phi_2(t)$ belongs to $L^\infty(\T^2)\cap \lmo_{inv}(\T^2)$.
\end{rem}
\subsection{Proof of  Theorem \ref{meanbmomulti}}
We prove Theorem \ref{meanbmomulti} in this section.

\begin{proof}[{\bf Proof of Theorem \ref{meanbmomulti}}]
$(\textrm{i})\Rightarrow (\textrm{ii})$: we start by proving that any multiplier from $\bmo(\T^N)$ to $\bmo_m(\T^N)$ is a bounded function. We  recall the following estimate of the mean over a rectangle of functions in $\bmo_m(\T^N)$:
$$|m_Rb|\lesssim \left(\log\frac{4}{|I_1|}+\cdots +\log\frac{4}{|I_N|}\right)||b||_{\bmo_m(\T^N)},\,\,\,R=I_1\times\cdots\times I_N\subset \T^N.$$
It follows that if $\phi$ is multiplier from $\bmo(\T^N)$ to $\bmo_m(\T^N)$, then for any $b\in \bmo(\T^N)$ and for any rectangle $R=I_1\times\cdots\times I_N\subset \T^N$,

\begin{align}\label{eq:multiboundedd}
|m_R\left(b\phi\right)|
&\lesssim \left(\sum_{j=1}^N\log\frac{4}{|I_j|}\right)||b\phi||_{\bmo_m(\T^N)}\\
&\le C\left(\sum_{j=1}^N\log\frac{4}{|I_j|}\right)\|M_\phi\|_{\bmo(\T^N)\rightarrow \bmo_m(\T^N)}\|b\|_{\bmo(\T^N)}.\notag
\end{align}

Applying (\ref{eq:multiboundedd}) to $b=\log_{I_1}+\cdots+\log_{I_N}$ and using assertions $(\textrm{i})$ and $(\textrm{ii})$ of Lemma \ref{lem:averagebmo}, we see  that there is a constant $C>0$ such that
$$|m_R\phi|\le C\|M_\phi\|_{\bmo(\T^N)\rightarrow \bmo_m(\T^N)},\,\,\,\textrm{for any}\,\,\, R=I_1\times\cdots\times I_N\subset \T^N.$$
We conclude that $\phi\in L^\infty(\T^N)$.

\vskip .2cm
To prove that $\phi\in \lmo_m(\T^N)$, we only need by the definition of $\lmo_m(\T^N)$ to check that for any integer $0<M<N$, any rectangle $R\subset \T^{M}$, $\|m_R\phi\|_{*,\log,K}$ ($K=N-M$) is uniformly bounded. Let $S$ be a rectangle in $\T^{K}$ and $\log_S(t_1,\cdots,t_K)=\log_{S_1}(t_1)+\cdots+\log_{S_K}(t_K)$, $S_j\subset \T$ be again the associated sum of functions which are uniformly in $\BMO(\T)$. We have
\Beas
L &:=& \frac{\sum_{j=1}^K\log\frac{4}{|S_j|}}{|S_j|}\int_{S}|m_R \phi(t)-m_{S\times R}\phi|dt\\ &=& \frac{1}{|S|}\int_{S}|m_R(\phi \log_S)(t)-m_{S\times R}(\phi \log_S)|dt\\  &\le&  ||m_R(\phi \log_S)||_{\bmo_m(\T^K)}\\ 
&\le& ||\phi \log_S||_{\bmo_m(\T^N)}\\ &\lesssim&
||M_\phi||_{\bmo(\T^N)\rightarrow \bmo_m(\T^N)}||\log_S||_{\bmo(\T^N)}\\ &=& ||M_\phi||_{\bmo(\T^N)\rightarrow \bmo_m(\T^N)}||\log_S||_{\bmo(\T^K)}\\ &\lesssim&
||M_\phi||_{\bmo(\T^N)\rightarrow \bmo_m(\T^N)}.
\Eeas
Hence for any integer $0<M<N$ and for any $R\subset \T^{M}$, $\|m_R\phi \|_{*,\log, N-M}$ is uniformly bounded. Thus, by definition, $\phi\in \lmo_m(\T^N)$.

\vskip .2cm

$(\textrm{ii})\Rightarrow (\textrm{i})$: Let $\phi\in L^\infty(\T^N)\cap \lmo_m(\T^N)$. To prove that\\ $\phi\in \mathcal {M}(\bmo(\T^N),\bmo_m(\T^N))$, we only need to check that
for any integer $0<M<N$, for any rectangle $R\subset \T^{M}$, and any $f\in \bmo(\T^N)$, $\|m_R(\phi f)\|_{*,K}$ ($K=N-M$) is uniformly bounded. Let $S$ be a rectangle in $\T^{K}$. Then
\Beas
L &:=& \frac{1}{|S|}\int_{S}|m_R (\phi f)(t)-m_{S\times R}\phi m_{S\times R}f|dt\\ &\le& \frac{1}{|S|}\int_{S}|m_R\left[(\phi-m_{S\times R}\phi) (f-m_{S\times R}f)\right](t)|dt\\ &+& \frac{1}{|S|}\int_{S}|\left(m_{S\times R}f\right)(m_R\phi)(t)-m_{S\times R}\phi m_{S\times R}f|dt\\ &+&  \frac{1}{|S|}\int_{S}|\left(m_{S\times R}\phi\right)(m_Rf)(t)-m_{S\times R}\phi m_{S\times R}f|dt\\ &=& L_1+L_2+L_3.
\Eeas
To estimate the first term, we only use that $\phi\in L^\infty(\T^N)$ to obtain
\Beas
&&L_1 := \frac{1}{|S|}\int_{S}|m_R\left[(\phi-m_{S\times R}\phi) (f-m_{S\times R}f)\right](t)|dt\\ &\le& \frac{1}{|S||R|}\int_{S\times R}|\left[(\phi-m_{S\times R}\phi) (f-m_{S\times R}f)\right](t_1,\cdots,t_N)|dt_1\cdots dt_N\\  &\le& \frac{||\phi||_{L^\infty(\T^N)}}{|S||R|}\int_{S\times R}|f(t_1,\cdots,t_N)-m_{S\times R}f|dt_1\cdots dt_N\\ &\le& ||\phi||_{L^\infty(\T^N)}||f||_{\bmo(\T^N)}.
\Eeas

For the second term, we use the fact that as $\|m_Rf\|_{*,K}$ is uniformly bounded,  \Beas |m_{S\times R}f|=|m_S(m_Rf)| &\lesssim& \left(\sum_{j=1}^{K}\log\frac{4}{|S_j|}\right)||m_Rf||_{*,K}\\ &\le& \left(\sum_{j=1}^{K}\log\frac{4}{|S_j|}\right)||f||_{\bmo_m(\T^N)},\Eeas
$S=S_1\times\cdots\times S_{K}\subset \T^{K}, K=N-M$.
Consequently,
\Beas
L_2 &:=& \frac{1}{|S|}\int_{S}|\left(m_{S\times R}f\right)(m_R\phi)(t)-m_{S\times R}\phi m_{S\times R}f|dt\\ &\lesssim& \frac{\left(\sum_{j=1}^K\log\frac{4}{|S_j|}\right)||f||_{\bmo(\T^N)}}{|S|}\int_{S}|(m_R\phi)(t)-m_{S\times R}\phi|dt\\ &\le& ||f||_{\bmo_m(\T^N)}||m_R\phi||_{*,\log,K}\\ &\le& ||f||_{\bmo_m(\T^N)}||\phi||_{\lmo_m(\T^N)}.
\Eeas
The last term only uses the fact that $\phi\in L^\infty(\T^N)$.
\Beas L_3 &:=& \frac{1}{|S|}\int_{S}|\left(m_{S\times R}\phi\right)(m_Rf)(t)-\left(m_{S\times R}\phi\right)\left( m_{S\times R}f\right)|dt\\ &\le&
\|\phi\|_{L^\infty(\T^N)}\frac{1}{|S|}\int_{S}|(m_Rf)(t)- m_{S\times R}f|dt\\ &\le& \|\phi\|_{L^\infty(\T^N)}\|m_Rf\|_{*,K}\\ &\le&
||\phi||_{L^\infty(\T^N)}||f||_{\bmo_m(\T^N)}.
\Eeas
The estimates of $L_1$, $L_2$ and $L_3$, and Lemma \ref{lem:equivnormbmo} allow to conclude that
$$||\phi f||_{\bmo_m(\T^N)}\lesssim \left(||\phi||_{L^\infty(\T^N)}+||\phi||_{\lmo_m(\T^N)}\right)||f||_{\bmo(\T^N)}.$$
This complete the proof of the theorem.
\end{proof}

\section{Multipliers to $\bmo(\T^N)$}
We would like to deduce some consequences of the above approach. We consider multipliers from any Banach space of functions on $\T^N$ (strictly) containing $\bmo(\T^N)$ to $\bmo(\T^N)$. We have the following general result.
\begin{thm}\label{Xtobmo}
Let $X$ be any Banach space of functions on $\T^N$ that strictly contains $\bmo(\T^N)$. Then $\mathcal {M}(X,\bmo(\T^N))=\{0\}$.
\end{thm}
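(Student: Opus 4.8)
The plan is to leverage Theorem~\ref{bmomultconst} together with a pigeonhole-style argument. Suppose $\phi \in \mathcal{M}(X, \bmo(\T^N))$ with $X \supsetneq \bmo(\T^N)$ strictly. Since $\bmo(\T^N) \subset X$ continuously (this is part of what ``$X$ strictly contains $\bmo(\T^N)$'' should mean for a Banach space of functions), in particular $\phi$ is a multiplier of $\bmo(\T^N)$ into itself: for any $f \in \bmo(\T^N)$ we have $f \in X$ hence $\phi f \in \bmo(\T^N)$. By Theorem~\ref{bmomultconst}, $\phi$ must be a constant, say $\phi \equiv c$. It remains to rule out $c \neq 0$. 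If $c \neq 0$, then multiplication by $c$ is just scaling, so $\mathcal{M}(X,\bmo(\T^N)) \ni c$ forces $c f \in \bmo(\T^N)$ for \emph{every} $f \in X$, i.e. $f \in \bmo(\T^N)$ for every $f \in X$, which contradicts $\bmo(\T^N) \subsetneq X$. Hence $c = 0$, and $\mathcal{M}(X,\bmo(\T^N)) = \{0\}$.

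In more detail, the steps in order are: (1) Record that $\bmo(\T^N) \hookrightarrow X$ so every test function used in the proof of Theorem~\ref{bmomultconst} --- in particular the functions $\log_R = \sum_k \log_{S_k} + \sum_j \log_{Q_j}$ from Lemma~\ref{lem:averagebmo}(ii) --- lies in $X$. (2) Observe that if $\phi \in \mathcal{M}(X,\bmo(\T^N))$, then restricting its action to the subspace $\bmo(\T^N) \subset X$ shows $\phi \in \mathcal{M}(\bmo(\T^N))$, so Theorem~\ref{bmomultconst} applies and gives $\phi \equiv c$ constant. (3) Use that a \emph{nonzero} constant multiplier is an isomorphism of any function space onto itself; so $c \neq 0$ together with $\phi \in \mathcal{M}(X,\bmo(\T^N))$ would give $X = c^{-1}\bmo(\T^N) = \bmo(\T^N)$ as sets, contradicting strictness. (4) Conclude $c = 0$, so $\mathcal{M}(X,\bmo(\T^N)) \subseteq \{0\}$; the reverse inclusion is trivial since $0 \cdot f = 0 \in \bmo(\T^N)$.

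The only genuine subtlety --- and the step I would scrutinize --- is the precise meaning of ``Banach space of functions on $\T^N$ that strictly contains $\bmo(\T^N)$''. For the argument to work one needs (a) a set-theoretic inclusion $\bmo(\T^N) \subseteq X$ of functions (not merely an abstract embedding), so that test functions built in Section~2 are legitimately elements of $X$; one does \emph{not} even need continuity of this inclusion for the logical skeleton above, though it is natural to assume it. And (b) the inclusion is proper as sets of functions. Under these hypotheses the argument is immediate; if instead $X$ is only assumed to contain an isomorphic copy of $\bmo(\T^N)$ via some non-identity map, the statement would need reinterpretation, but the natural reading --- and surely the intended one --- is the concrete one, under which the proof is the short pigeonhole argument sketched above. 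I would write the proof essentially as: apply Theorem~\ref{bmomultconst} to get that a multiplier is constant, then note a nonzero constant would force $X = \bmo(\T^N)$.
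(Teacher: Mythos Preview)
Your argument is correct and essentially identical to the paper's: restrict the multiplier to $\bmo(\T^N)\subset X$, apply Theorem~\ref{bmomultconst} to conclude it is a constant, then observe that a nonzero constant would force $X=\bmo(\T^N)$. Your extra remark about the test functions $\log_R$ lying in $X$ is not actually needed, since you only invoke the conclusion of Theorem~\ref{bmomultconst} rather than rerunning its proof inside $X$.
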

\begin{proof}
Clearly, $0$ sends any function of $X$ to $\bmo(\T^N)$ by multiplication. Now let $\phi$ be any multiplier from $X$ to $\bmo(\T^N)$, then $\phi$
is also a multiplier from $\bmo(\T^N)$ to itself. It follows from Theorem \ref{bmomultconst} that $\phi$ is a constant $C$. Suppose that $C\neq 0$ and recall that $\bmo(\T^N)$ is a proper subspace of $X$. Then for any $f\in X$, we have that $f=C(\frac{1}{C}f)=\phi(\frac{1}{C}f)\in \bmo(\T^N)$. This contradicts the fact that $\bmo(\T^N)$ is a strict subspace of $X$. Hence $C$ is necessarily $0$. The proof is complete.
\end{proof}
Taking as $X$, the Chang-Fefferman $\BMO$ space or $\bmo_m(\T^N)$ we have as corollary the following.
\begin{cor}
We have $$\mathcal {M}(\BMO(\T^N),\bmo(\T^N))=\mathcal {M}(\bmo_m(\T^N),\bmo(\T^N))=\{0\}.$$
\end{cor}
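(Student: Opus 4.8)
The plan is to read the corollary off Theorem \ref{Xtobmo}. That theorem asserts that for \emph{every} Banach space $X$ of functions on $\T^N$ which strictly contains $\bmo(\T^N)$ one has $\mathcal M(X,\bmo(\T^N))=\{0\}$: a multiplier $\phi\colon X\to\bmo(\T^N)$ restricts, via the inclusion $\bmo(\T^N)\subseteq X$, to a multiplier of $\bmo(\T^N)$, hence is a constant $C$ by Theorem \ref{bmomultconst}; and $C\neq0$ would give $f=\phi\,(C^{-1}f)\in\bmo(\T^N)$ for every $f\in X$, i.e. $X\subseteq\bmo(\T^N)$, contradicting strictness. So it suffices to verify the hypothesis of Theorem \ref{Xtobmo} for $X=\bmo_m(\T^N)$ and for $X=\BMO(\T^N)$, the Chang--Fefferman product $\BMO$; the two stated equalities then follow at once (the inclusion $\{0\}\subseteq\mathcal M(X,\bmo(\T^N))$ being trivial, since the zero multiplication maps $X$ into $\bmo(\T^N)$).

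For $X=\bmo_m(\T^N)$ nothing beyond the present paper is required: $\bmo_m(\T^N)$ is a Banach space (noted after Definition \ref{meanbmo}), $\bmo(\T^N)$ embeds continuously into it (also noted there), and the embedding is strict --- this is precisely the Main Theorem, Theorem \ref{mainembed}. Hence Theorem \ref{Xtobmo} applies and gives $\mathcal M(\bmo_m(\T^N),\bmo(\T^N))=\{0\}$. For $X=\BMO(\T^N)$ I would invoke the classical theory of product BMO: the Chang--Fefferman space is a Banach space (it is the dual of the product Hardy space $H^1$), it contains $\bmo(\T^N)$ continuously, and the containment is strict. Citing these standard facts (\cite{ChFef1}, \cite{cotsad}), Theorem \ref{Xtobmo} again applies and yields $\mathcal M(\BMO(\T^N),\bmo(\T^N))=\{0\}$.

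There is essentially no obstacle: once Theorem \ref{Xtobmo} is available the corollary is purely formal, resting only on the set-theoretic inclusions $\bmo(\T^N)\subseteq\bmo_m(\T^N)$ and $\bmo(\T^N)\subseteq\BMO(\T^N)$, their strictness, and Theorem \ref{bmomultconst}; no norm estimates for the embeddings enter. The one point lying outside the self-contained part of the paper is the strictness of $\bmo(\T^N)\subsetneq\BMO(\T^N)$, which one borrows from the literature on product BMO; for $\bmo_m(\T^N)$ the corresponding strictness is, as just recalled, the paper's own main result.
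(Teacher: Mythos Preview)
Your proposal is correct and follows exactly the paper's approach: the corollary is stated immediately after Theorem \ref{Xtobmo} with the remark that one simply takes $X=\BMO(\T^N)$ or $X=\bmo_m(\T^N)$, and you do precisely this, supplying the (brief) verification of the hypotheses. Your additional comments on where the strict inclusions come from (Theorem \ref{mainembed} for $\bmo_m$, the literature for Chang--Fefferman $\BMO$) are accurate and make explicit what the paper leaves implicit.
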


The author would like to thank the referee for comments and observations that improved the presentation of this note.


\end{document}